 \newcommand{\hm}[1]{\leavevmode{\marginpar{\tiny%
 $ \hbox to 0mm{\hspace*{-0.5mm} $ \leftarrow $ \hss}%
 \vcenter{\vrule depth 0.1mm height 0.1mm width \the\marginparwidth}%
 \hbox to
 0mm{\hss $ \rightarrow $ \hspace*{-0.5mm}} $ \\\relax\raggedright #1}}}
\newcommand{\euler}{\mathrm{e}}
\newcommand{\drm}{\mathrm{d}}
\newcommand{\RR}{\mathbb{R}}
\newcommand{\cC}{\mathcal{C}}
\newcommand{\tvert}[1]{{\left\vert\kern-0.25ex\left\vert\kern-0.25ex\left\vert #1
    \right\vert\kern-0.25ex\right\vert\kern-0.25ex\right\vert}}
\renewcommand{\epsilon}{\varepsilon}
\DeclareMathOperator{\diam}{\mathop{diam}}
\DeclareMathOperator{\Deg}{Deg}
\DeclareMathOperator{\arctanh}{arctanh}
\newtheorem{theorem}{Theorem}[section]
\newtheorem{proposition}[theorem]{Proposition}
\newtheorem{corollary}[theorem]{Corollary}
\theoremstyle{definition}
\newtheorem{definition}[theorem]{Definition}
\theoremstyle{remark}
\newtheorem{remark}[theorem]{Remark}
\newtheorem{example}[theorem]{Example}
\begin{document}
 \title{Distance bounds for graphs with some negative Bakry-\'Emery curvature}
 \author{Shiping Liu, Florentin M\"unch, Norbert Peyerimhoff, Christian Rose}
 \date{\today}
 \maketitle
 \begin{abstract}
  We prove distance bounds for graphs possessing positive Bakry-\'Emery curvature apart from an exceptional set, where the curvature is allowed to be non-positive. If the set of non-positively curved vertices is finite, then the graph admits an explicit upper bound for the diameter. Otherwise, the graph is a subset of the tubular neighborhood with an explicit radius around the non-positively curved vertices. Those results seem to be the first assuming non-constant Bakry-\'Emery curvature assumptions on graphs.
 \end{abstract}

 \section{Introduction}
 In Riemannian geometry, diameter bounds for complete connected Riemannian manifolds are well established under several curvature assumptions. The well known Bonnet-Myers theorem states that if the Ricci curvature of a manifold is larger than a positive threshold, the diameter of the manifold is finite, and, therefore, the manifold itself compact, see \cite{Jost-08,Petersen-16} and the references therein. In particular, the classical Jacobi field technique used there provides also a sharp upper estimate for the diameter. Later on, this result was generalized in \cite{PetersenSprouse-98}. There, the authors assumed that the amount of the Ricci curvature of the manifold $M$ below a positive level is locally uniformly $L^p$-small for some $p>\dim M/2$, and obtain indeed a diameter bound depending on this kind of smallness of the curvature.\\
The concept of Ricci curvature was transferred into various settings. Let us provide a brief summary of the history.
Already in 1985, Bakry and \'Emery introduced Ricci curvature on diffusion semigroups via the highly generalizable $\Gamma$-calculus \cite{bakry1985diffusions} derived from the Bochner formula.
This approach has first been applied to a discrete setting in \cite{elworthy91} and diversely used in \cite{schmuckenschlager1998curvature,bauer2015li,horn2014volume,munch2017remarks,munch2014li,lin2010ricci,fathi2015curvature,gong2015properties,hua2017stochastic,LiuMuenchP-16,chung2014harnack}.
The theory of local metric measure spaces has also benefitted from the Bakry-\'Emery approach. For more information about Ricci-curvature on metric measure spaces, see \cite{ambrosio2014metric,erbar2015equivalence,lott2009ricci,sturm2006geometry}.
A concept of Ricci curvature on graphs via optimal transport has been introduced by Ollivier \cite{ollivier2009ricci} and applied in \cite{lin2010ricci,lin2011ricci,bauer2011ollivier,jost2014ollivier}.
Recently, Erbar,  Maas and Mielke introduced a Ricci curvature on graphs via convexity of the entropy \cite{erbar2012ricci,fathi2016entropic,erbar2017ricci,mielke2013geodesic}.
In a highly celebrated paper, Erbar, Kuwada and Sturm proved that on metric measure spaces, the concepts of Ricci curvature via $\Gamma$-calculus (Bakry-\'Emery) and optimal transport and entropy (Lott-Sturm-Villani) coincide \cite{erbar2015equivalence}. On the other hand, in the setting of graphs, Bakry-{\'E}mery Ricci curvature and Ollivier Ricci curvature are often quite different and there are many open questions about the relations between these curvature notions.

It is now natural to ask for analogues and generalizations of the diameter bounds for manifolds above to contexts in which concepts of Ricci curvature exist.
For metric measure spaces, there have been attempts to generalize the Bonnet-Myers theorem to variable Ricci curvature bounds in an integral sense, see \cite{Ketterer-15} and the references therein. For connected graphs $G=(V,E)$, the authors of \cite{LiuMuenchP-16} show a sharp diameter bound assuming positive Bakry-\'Emery curvature in the $CD(K,N)$-setting for $N\in (0,\infty]$, notions of curvature we will introduce below. For convenience, we recall the result for further reference.
\begin{theorem}[\cite{LiuMuenchP-16}]\label{thm:LMP}
Let $G=(V,w,m)$ be a graph.
\begin{enumerate}
 \item
  Assume that $CD(K,\infty)$ holds for $K>0$ and the graph admits an upper bound $\Deg_{\max}$ for the weighted vertex degree.
Then, we have $$\diam_d(G)\leq \frac{2\Deg_{\max}}K,$$ where $\diam_d$ is the diameter of $G$ with respect to the combinatorial distance.
 \item

 Assume that $CD(K,N)$ holds for $K,N>0$.
   Suppose that $G$ is complete in the sense of \cite{hua2017stochastic} and satisfies $\inf_{x\in V} m(x) >0$.
 Then, we have $$\diam_\sigma(G)\leq \pi\sqrt{\frac{N}{K}},$$ where $\sigma$ is the resistance metric defined below.
 \end{enumerate}
\end{theorem}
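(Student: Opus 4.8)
The plan is to derive both parts in parallel by studying the heat semigroup $P_t=\euler^{t\Delta}$ through the gradient estimates that the curvature--dimension conditions impose on it. The common scheme is: for a suitable bounded test function $f$, bound $\|\Delta P_t f\|_\infty$ in terms of $t$, $K$, and (for the second part) $N$; since that bound turns out to be integrable in $t$ on $(0,\infty)$, the function $P_t f$ converges pointwise as $t\to\infty$ to a limit $f_\infty$ with $\Gamma f_\infty=0$, which is therefore a constant $c$ by connectedness, and
\[
  |f(x)-c|=\Bigl|\int_0^\infty \Delta P_s f(x)\,\drm s\Bigr|\le\int_0^\infty\|\Delta P_s f\|_\infty\,\drm s
\]
controls the oscillation $\sup_V f-\inf_V f$. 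Since, by construction, the combinatorial distance $d$ (resp. the resistance metric $\sigma$) is the largest (pseudo-)metric for which the relevant test functions are $1$-Lipschitz, a uniform oscillation bound over all admissible $f$ is exactly a bound on $\diam_d$ (resp. $\diam_\sigma$). Stochastic completeness --- furnished in part~(1) by the degree bound and in part~(2) by completeness in the sense of \cite{hua2017stochastic} together with $CD(K,N)$ --- ensures $P_t\mathbf 1=\mathbf 1$ and makes the integrated heat equation $P_t f=f+\int_0^t\Delta P_s f\,\drm s$ legitimate on $\ell^\infty(V,m)$.

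For part~(1), fix $o,z\in V$ and apply the scheme to $f=\min\bigl(d(o,\cdot),d(o,z)\bigr)$, which is bounded and combinatorially $1$-Lipschitz, whence $2\Gamma f\le\Deg_{\max}$ pointwise and so $P_t(\Gamma f)\le\tfrac12\Deg_{\max}$. The $CD(K,\infty)$ gradient estimate $\Gamma(P_t f)\le\euler^{-2Kt}P_t(\Gamma f)$ gives $\Gamma(P_t f)\le\tfrac12\euler^{-2Kt}\Deg_{\max}$, and Cauchy--Schwarz in the definition of $\Delta$ (i.e. $|\Delta g|\le\sqrt{\Deg_{\max}}\,\sqrt{2\Gamma g}$) then yields $\|\Delta P_t f\|_\infty\le\Deg_{\max}\,\euler^{-Kt}$. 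Integrating, $|f(x)-c|\le\Deg_{\max}/K$ for every $x$, and since $f(o)=0=\inf_V f$ and $f(z)=d(o,z)=\sup_V f$, this gives $d(o,z)\le 2\Deg_{\max}/K$.

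For part~(2) there is no degree bound, so one must use the \emph{dimensional} gradient estimate. I would obtain it by interpolating $\psi(s)=\euler^{-2Ks}P_s\bigl(\Gamma(P_{t-s}f)\bigr)$ on $[0,t]$: one computes $\psi'(s)=2\euler^{-2Ks}P_s\bigl(\Gamma_2(P_{t-s}f)-K\,\Gamma(P_{t-s}f)\bigr)$, bounds this from below by $\tfrac2N\euler^{-2Ks}P_s\bigl((\Delta P_{t-s}f)^2\bigr)$ using $CD(K,N)$, and removes $P_s$ via $P_s\bigl((\Delta P_{t-s}f)^2\bigr)\ge(P_s\Delta P_{t-s}f)^2=(\Delta P_t f)^2$, arriving at
\[
  \Gamma(P_t f)+\frac{1-\euler^{-2Kt}}{NK}\,(\Delta P_t f)^2\ \le\ \euler^{-2Kt}\,P_t(\Gamma f).
\]
Now take $f$ bounded with $\Gamma f\le 1$, that is, of the type defining the resistance metric $\sigma$; then $P_t(\Gamma f)\le1$ and the estimate gives $\|\Delta P_t f\|_\infty\le\sqrt{NK}\,(\euler^{2Kt}-1)^{-1/2}$, which is integrable on $(0,\infty)$. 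Hence
\[
  |f(x)-c|\le\sqrt{NK}\int_0^\infty\frac{\drm t}{\sqrt{\euler^{2Kt}-1}}=\sqrt{NK}\cdot\frac{\pi}{2K}=\frac{\pi}{2}\sqrt{\frac NK},
\]
so $\sup_V f-\inf_V f\le\pi\sqrt{N/K}$. Finally, an arbitrary (possibly unbounded) admissible $g$ with $\Gamma g\le1$ may be truncated to the interval between $g(o)$ and $g(z)$ without increasing $\Gamma g$ and without changing those two values; applying the oscillation bound to the truncation gives $g(z)-g(o)\le\pi\sqrt{N/K}$, and taking the supremum over such $g$ yields $\sigma(o,z)\le\pi\sqrt{N/K}$, i.e. $\diam_\sigma(G)\le\pi\sqrt{N/K}$.

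The real work --- and the part I expect to be the main obstacle, especially for part~(2) --- is to make this semigroup calculus rigorous on a possibly infinite graph: one must justify that $P_t f$ is differentiable with $\partial_t P_t f=\Delta P_t f$ on $\ell^\infty(V,m)$, that $P_t f$ stays in a function class to which $CD(K,N)$ applies, that the interpolation step and the various Cauchy--Schwarz manipulations (including commuting $\Delta$ with $P_s$ and passing limits through $\Gamma$) are valid, and that the $t\to\infty$ limit exists and is constant. This is precisely where the completeness hypothesis of \cite{hua2017stochastic} and the assumption $\inf_x m(x)>0$ are needed. Everything else is routine: the elementary integral $\int_0^\infty(\euler^{2Kt}-1)^{-1/2}\,\drm t=\pi/(2K)$ (substitute $u=\euler^{Kt}$) and the bookkeeping of truncations that links the bounded test functions required by $P_t$ with the possibly unbounded ones that define $\sigma$.
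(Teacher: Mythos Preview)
The paper does not give its own proof of this theorem; it is quoted verbatim from \cite{LiuMuenchP-16} as background for the new results. That said, your argument is essentially the one used there and, more to the point, is the same semigroup machinery that the present paper deploys throughout: the interpolation $\psi(s)=\euler^{-2Ks}P_s\Gamma(P_{t-s}f)$ is exactly the ``Ledoux ansatz'' carried out in the proof of Proposition~\ref{gradientsg}; the resulting dimensional gradient estimate is recalled as \eqref{eq:LMP16}; and the key integral $\int_0^\infty\sqrt{KN/(\euler^{2Kt}-1)}\,\drm t=\tfrac{\pi}{2}\sqrt{N/K}$ appears explicitly in the proof of Theorem~\ref{thm:TubeNegativeCurvatureFiniteDimension}. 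So your proposal is correct and matches the paper's approach.

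Two minor remarks. First, for part~(1) you do not actually need completeness or non-degenerate measure as separate hypotheses: under $\Deg\le\Deg_{\max}$ the Laplacian is a bounded operator on $\ell^\infty(V)$, so $P_t=\euler^{t\Delta}$ is given by the norm-convergent exponential series and all the commutations and differentiations you perform are automatic. Second, your closing paragraph correctly identifies where the hypotheses in part~(2) are used, and the paper treats exactly these points by invoking \cite{hua2017stochastic,gong2015properties} for the semigroup characterization of $CD(K,N)$ under completeness and $\inf_x m(x)>0$; you would simply cite the same sources rather than redo that analysis.
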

In this article, we generalize the above discrete Bonnet-Myers theorem to the situation where the graph is positively curved except on a vertex set $V_0$, where the curvature is allowed to be non-positive. The main result below states that a graph is always covered by the tubular neighborhood around the negatively curved vertices of an explicit radius depending on local curvature dimension assumptions, which are given pointwise by the Bochner formula shown below. This description of the curvature involves the Laplacian of the space considered. The idea is to compare the different curvature values on the sets $V_0$ and $V\setminus V_0$ via the semigroups associated to different Laplacians. On one hand, we have a graph of constant positive curvature, the lower curvature bound of $V\setminus V_0$, and a graph of constant negative curvature, the lower curvature bound of $V_0$. Those lead to different Laplacians and therefore to different semigroups, which have to be controlled in a manner such that the diameter of the whole graph can be bounded above. After we introduced the neccessary framework and the main result in the section below, we show several preparatory estimates of the semigroup depending on the set of negatively curved vertices and refine the analysis of the techniques developed in \cite{LiuMuenchP-16}.
 \section{Setting and main result}
 Let $G=(V,w,m)$ be a weighted, connected, locally finite graph. That is, on the vertex set $V\not=\emptyset$, we introduce a symmetric map $$w\colon V\times V\to [0,\infty),\quad (x,y)\mapsto w_{xy}=w_{yx}$$ and $$m\colon V\to (0,\infty),\quad x\mapsto m_x.$$
 If $x,y$ are two vertices with $w_{xy}>0$, we say they are neighbors, or they are connected by an edge, and write $x\sim y$.  We say $G$ is locally finite if each vertex has finitely many neighbors.
 The maps $w$ and $m$ introduced above represent the edge measure and the vertex measure of $G$, respectively.

For any two vertices of a connected graph, there is a path connecting them. The graph distance $d\colon V\times V\to [0,\infty]$ is given by the number of edges in a shortest path between two vertices. The diameter $\diam(V')$ of a set $V'\subset V$ is the maximum graph distance between any two vertices in $V'$. By $T_r(V')$ we denote the tubular neighborhood of $V'\subset V$ of radius $r>0$. If $V'=\{x\}$ for some $x\in V$, then $B_r(x):=T_r(V')$, the ball around $x$ with radius $r>0$. As usual, the weighted degree of a vertex $x\in V$ is given by
$$\Deg(x)=\frac 1{m(x)}\sum_{y\in V}w_{xy}.$$
We say that $G$ has bounded vertex degree if there exists $\Deg_{\max} < \infty$ with
$\Deg(x) \le \Deg_{\max}$ for all vertices $x \in V$.
Denote by $\cC_c(V)$ the set of finitely supported functions on $V$ and $\Vert\cdot\Vert_\infty$ the maximum norm. The Laplacian on functions $f\in\cC_c(V)$ is defined by
 $$\Delta f(x)=\frac{1}{m(x)}\sum_{y\in V}w_{xy}(f(y)-f(x)),\quad x\in V.$$
 \begin{remark}
  If $m(x)=\sum_{y\in V}w_{xy}$ for any $x\in V$, the associated Laplacian is called the normalized Laplacian. If $m(x)=1$ for any $x\in V$, the Laplacian is called combinatorial or physical.
 \end{remark}
The definition of the Laplacian leads to the so-called carr\'e du champ operator $\Gamma$: for all $f,g\in \cC_c(V)$, $x \in V$:
\begin{eqnarray*}
\Gamma(f,g)(x)&=&\frac 12(\Delta(fg)-f\Delta g-g\Delta f)(x) \\
&=& \frac{1}{2m(x)} \sum_{y \in V} \omega_{xy} (f(y)-f(x))(g(y)-g(x)).
\end{eqnarray*}
For simplicity, we always write $\Gamma(f):=\Gamma(f,f)$. Iterating $\Gamma$, we can define another form $\Gamma_2$, which is given by $$\Gamma_2(f,g)(x)=\frac 12(\Delta\Gamma(f,g)-\Gamma(f,\Delta g)-\Gamma(g,\Delta f))(x), \quad x\in V, f,g\in \cC_c(V).$$ We abbreviate $\Gamma_2(f)=\Gamma_2(f,f)$.\\
As mentioned before, the graph distance is defined by shortest paths between two points. In contrast, we can define another kind of metric coming from the operator $\Gamma$.
 \begin{definition}[Intrinsic/resistance metric]
 Let $G=(V,w,m)$ be a graph.
 \begin{enumerate}[(i)]
 \item
	A metric $\rho$ on $V$ is called \emph{intrinsic} if for all $x \in V$,
	\begin{align}
	\left\|\Gamma \rho(x,\cdot) \right\|_\infty \leq 1.
	\end{align}
 \item For an intrinsic metric $\rho$, the \emph{jump size} of $\rho$ is given by $$R_\rho:=\sup\{\rho(x,y)\mid x\sim y\}.$$
 \item The \emph{resistance} metric $\sigma$ on $V$ is given by $$\sigma\colon V\times V\to[0.\infty],\quad (x,y)\mapsto\sup\{f(y)-f(x)\mid\Vert\Gamma f\Vert_\infty\leq 1\}.$$
\end{enumerate}
 \end{definition}

As in the case of the graph distance, if $r$ is an intrinsic or the resistance metric, we define $\diam_r(V')$ for a subset $V'\subset V$ to be the diameter of $V'$ with respect to $r$, and $T^r_R(V')$ denotes the tubular neighborhood of $V'$ of radius $R$ with respect to $r$, etc. Intrinsic metrics have already been used to solve various problems on graphs, see, e.g., \cite{BHK-13,BKW-15,Fol-11,Fol-14,GHM-12, HKMW-13, HK-14}.
\begin{example}
 A natural intrinsic metric on a graph was introduced in \cite[Definition 1.6.4]{Huang-11} (see also \cite[Example~2.9]{hua2017stochastic}):
 $$\rho(x,y)=\inf_{\gamma} \sum_{i=0}^{n-1}(\Deg(x_i)\vee\Deg(x_{i+1}))^{-1/2},\quad x,y\in V,$$
 where $\gamma$ is a path $x=x_0\sim x_1\sim\ldots\sim x_{n-1}\sim x_n=y$, and $a\vee b:=\max\{a,b\}$ for $a,b\in \mathbb{R}$.
\end{example}

\begin{remark}\label{remark:intrinsic}\begin{enumerate}[(i)]
\item All metrics smaller than an intrinsic metric are intrinsic, too. In general, the resistance metric $\sigma$ is not intrinsic, but is greater than all intrinsic metrics. 

\item
The properties of the resistance metric rely on the properties of the underlying Laplacian.
It is shown in Proposition~\ref{prop:DistanceAndDimensionBoundedDegree} that  if $\Deg(x)\leq \Deg_{\max}<\infty$ for all $x\in V$, we have that $\rho$ is intrinsic with
$$
\rho(x,y) := \sqrt{\frac 2 {\Deg_{\max}}} d(x,y).
$$

\item If $\rho$ is intrinsic and all $\rho$-balls are finite, then $G$ is complete \cite[Theorem 2.8]{hua2017stochastic}. For the reader's convenience, we recall that a graph $G=(V,w,m)$ is complete in the sense of \cite{hua2017stochastic} if there exists a nondecreasing sequence of finitely supported functions $\{\eta_k\}_{k=1}^\infty$ such that $\lim_{k\to \infty}\eta_k=1_V$ and $\Gamma(\eta_k)\leq 1/k$, where $1_V$ is the constant function $1$ on $V$.

\end{enumerate}
\end{remark}

The operator $\Gamma$ not only leads to a definition of a metric, but also to the curvature conditions in the sense of Bakry-\'{E}mery.
 \begin{definition} Let $K\in \RR$ and $N\in (0,\infty]$.
  \begin{enumerate}[(i)]
   \item Define the pointwise curvature dimension condition $CD(K,N,x)$ for $x\in V$ by $$\Gamma_2(f)(x)\geq K\Gamma(f)(x)+\frac 1N(\Delta f)^2(x),\,\,\,\text{for any }\,\,\,f: V\to \mathbb{R}.$$
   \item The curvature dimension condition $CD(K,N)$ holds iff $CD(K,N,x)$ holds for any $x\in V$.
   \item For any $x\in V$, we define $$K_{G,x}(N):=\sup\{K\in\RR\mid CD(K,N,x)\}.$$
  \end{enumerate}
 \end{definition}

We will need different assumptions to guarantee the semigroup characterization of Bakry-\'{E}mery curvature (see \cite{hua2017stochastic,gong2015properties}).
These assumptions are satisfied whenever the vertex measure $m$ is \emph{non-degenerate}, that is, $$\inf_{x\in V}m(x)>0,$$ and all balls with respect to an intrinsic metric are finite. In the case of bounded vertex degree $\Deg(x) \le \Deg_{\max}$ for
all $x \in V$, the non-degenerate vertex measure condition can usually be dropped.

In case of bounded vertex degree, the combinatorial distance is intrinsic up to a constant. Furthermore, we have a uniform control of the dimension in terms of the curvature.

\begin{proposition}\label{prop:DistanceAndDimensionBoundedDegree}
	Let $G=(V,w,m)$ be a graph with bounded degree $\Deg(x) \leq \Deg_{\max}$.
	Then, $\rho:=d \sqrt{\frac {2} {\Deg_{\max}}}$ is an intrinsic metric.
	Furthermore if $G$ satisfies $CD(K,\infty)$, it also satisfies $CD(K-s, \frac{2\Deg_{\max}}s)$ for all $s>0$.
\end{proposition}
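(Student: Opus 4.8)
The plan is to treat the two assertions separately; each reduces to an elementary pointwise inequality.

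\emph{First assertion.} To show that $\rho := d\sqrt{2/\Deg_{\max}}$ is intrinsic, I would fix $x\in V$ and estimate $\Gamma\rho(x,\cdot)$ at an arbitrary vertex $z$. Directly from the definition of the carré du champ operator,
\[
\Gamma\rho(x,\cdot)(z) = \frac{1}{2m(z)}\sum_{y\in V} w_{zy}\bigl(\rho(x,y)-\rho(x,z)\bigr)^2 .
\]
The triangle inequality for $\rho$ gives $|\rho(x,y)-\rho(x,z)| \le \rho(y,z) = d(y,z)\sqrt{2/\Deg_{\max}}$, and since every term with $w_{zy}>0$ satisfies $d(y,z)\le 1$ (the term $y=z$, if present, vanishes), each summand is at most $w_{zy}\cdot 2/\Deg_{\max}$. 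Summing and recalling $\Deg(z) = \tfrac{1}{m(z)}\sum_y w_{zy}$ yields $\Gamma\rho(x,\cdot)(z) \le \Deg(z)/\Deg_{\max} \le 1$, which is precisely the defining condition of an intrinsic metric.

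\emph{Second assertion.} The crucial observation is the pointwise bound
\[
(\Delta f)^2(x) \le 2\Deg_{\max}\,\Gamma(f)(x) \qquad \text{for all } f\colon V\to\RR,\ x\in V .
\]
This follows from Cauchy--Schwarz applied to $\Delta f(x) = \tfrac{1}{m(x)}\sum_y w_{xy}(f(y)-f(x))$ after splitting $w_{xy} = \sqrt{w_{xy}}\cdot\sqrt{w_{xy}}$: one factor contributes $\sum_y w_{xy} = m(x)\Deg(x)$, the other contributes $\sum_y w_{xy}(f(y)-f(x))^2 = 2m(x)\Gamma(f)(x)$, so that $(\Delta f)^2(x) \le 2\Deg(x)\Gamma(f)(x) \le 2\Deg_{\max}\Gamma(f)(x)$. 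Granting this, for $s>0$ fixed, from $CD(K,\infty)$ and the bound above I get
\[
\Gamma_2(f)(x) \ge K\Gamma(f)(x) = (K-s)\Gamma(f)(x) + s\Gamma(f)(x) \ge (K-s)\Gamma(f)(x) + \frac{s}{2\Deg_{\max}}(\Delta f)^2(x) ,
\]
which is exactly $CD(K-s,\, 2\Deg_{\max}/s,\, x)$, since the dimension parameter $N = 2\Deg_{\max}/s$ corresponds to $1/N = s/(2\Deg_{\max})$.

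Neither step poses a genuine obstacle. The only point that takes a moment's thought is the ``dimension'' estimate $(\Delta f)^2 \le 2\Deg_{\max}\,\Gamma(f)$ — a discrete Cauchy--Schwarz inequality of trace type — which is exactly what forces $\Deg_{\max}$ to appear in the dimension parameter; once it is in hand, the rest is bookkeeping, chiefly tracking the normalization constant $\sqrt{2/\Deg_{\max}}$ and dismissing the vanishing $y=z$ terms.
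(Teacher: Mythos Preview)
Your proof is correct and follows essentially the same approach as the paper's own argument: both parts are handled by the same pointwise computations, with the first assertion reducing to $|\rho(x,y)-\rho(x,z)|\le\sqrt{2/\Deg_{\max}}$ for neighbors and the second to the Cauchy--Schwarz bound $(\Delta f)^2\le 2\Deg_{\max}\,\Gamma f$. You are somewhat more explicit than the paper about the Cauchy--Schwarz step, but there is no substantive difference in method.
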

\begin{proof}
	Let $x,x_0 \in V$ and let $g:=\rho(x_0,\cdot)$
	We have
	\begin{align*}
	\Gamma g(x) &= \frac 1 {2m(x)}\sum_{y} w(x,y) (g(x)-g(y))^2\\
	&\leq \frac 1 {2m(x)}\sum_{y} w(x,y) \sqrt{\frac {2} {\Deg_{\max}}}^2\\
	&\leq\frac {2} {\Deg_{\max}} \cdot \frac  {\Deg_{\max}}{2}\\
	&=1
	\end{align*}
	which shows that $\rho$ is an intrinsic metric.	
	Furthermore, $CD(K,\infty)$ implies for all $f$,
	\begin{align*}
	\Gamma_2 f \geq (K-s) \Gamma f + s \Gamma f \geq (K-s) \Gamma f + \frac s {2 \Deg_{\max}} (\Delta f)^2
	\end{align*}
	where the latter inequality follows from Cauchy-Schwarz. Hence, $G$ satisfies the condition
	$CD(K-s, \frac{2\Deg_{\max}}s)$ as claimed.
\end{proof}
The main theorem stated below extends Theorem \ref{thm:LMP} to the case of negatively curved vertices.
\begin{theorem}\label{thm:main}
  Let $G=(V,w,m)$ be a weighted, complete graph with non-degenerate vertex measure $m$, let $N\in(0,\infty]$, and $\rho$ an intrinsic metric on $G$. 
  Define $$V_0:=\{x\in V\mid K_{G,x}(N)\leq 0\}.$$
  \begin{enumerate}[(i)]
   \item If $N=\infty$, $V_0=\emptyset$, $\Deg(x)\leq \Deg_{\max}<\infty$ for any $x\in V$, and $CD(K_0,N)$ for $K_0>0$, then
   $$\diam_\rho(G)\leq \frac {2\sqrt{2\Deg_{\max}}}{K_0}.$$
   \item If $N<\infty$, $V_0=\emptyset$, and $CD(K_0,N)$ for $K_0>0$, then
   $$\diam_\rho(G)\leq \pi\sqrt{\frac N{K_0}}.$$
   \item If $N=\infty$, $V_0\not=\emptyset$, $\Deg(x)\leq \Deg_{\mathrm{max}}<\infty$ for all $x\in V$, and assuming, for $K,K_0>0$,
  \begin{align*}CD(-K_0,N,x)\quad\forall\, x\in V\text{ and }CD(K,N,x)\quad \forall \, x\in V\setminus V_0,
   \end{align*}
   then
   $$V\subset T_{R}(V_0),\quad R:= 1 + 18.2\sqrt{2} \, e^{4K_0/K}\,\frac{ \Deg_{\max}}{\sqrt{KK_0}}.$$
   \item If $N<\infty$, $V_0\not=\emptyset$, and assuming, for $K,K_0>0$,
  \begin{align*}CD(-K_0,N,x)\quad\forall\, x\in V\text{ and }CD(K,N,x)\quad \forall \, x\in V\setminus V_0,
   \end{align*}
 then $$V\subset T^\rho_{R}(V_0),\quad R:=R_\rho+18.2\euler^{2K_0/K}\sqrt{\frac{N}{K+K_0}}.$$
  \end{enumerate}
 \end{theorem}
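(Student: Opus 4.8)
The plan is to obtain (i) and (ii) by reduction to Theorem~\ref{thm:LMP} and to prove the two new parts (iii), (iv) by a localized, perturbed version of the semigroup proof of the discrete Bonnet--Myers theorem. In (i), the hypothesis $CD(K_0,\infty)$ (which in particular forces $V_0=\emptyset$) together with bounded weighted degree is exactly the setting of Theorem~\ref{thm:LMP}(1), giving $\diam_d(G)\le 2\Deg_{\max}/K_0$; since $\rho=\sqrt{2/\Deg_{\max}}\,d$ is intrinsic by Proposition~\ref{prop:DistanceAndDimensionBoundedDegree}, rescaling gives $\diam_\rho(G)\le 2\sqrt{2\Deg_{\max}}/K_0$. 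In (ii), $CD(K_0,N)$ with $K_0,N>0$, completeness and non-degeneracy of $m$ are precisely the hypotheses of Theorem~\ref{thm:LMP}(2), so $\diam_\sigma(G)\le\pi\sqrt{N/K_0}$; since by Remark~\ref{remark:intrinsic}(i) every intrinsic metric $\rho$ satisfies $\rho\le\sigma$, also $\diam_\rho(G)\le\pi\sqrt{N/K_0}$.

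For (iii) and (iv) I argue by contradiction: suppose some $o\in V$ has $\dist_\rho(o,V_0)>R$. Since the pointwise curvature--dimension conditions involve only a vertex and its neighbours, on the $\rho$-ball of radius $R-R_\rho$ around $o$ the positive condition $CD(K,N,\cdot)$ holds, while $CD(-K_0,N,\cdot)$ holds on all of $V$. Under the completeness and non-degeneracy hypotheses the heat semigroup $P_t=\euler^{t\Delta}$ is self-adjoint, conservative, and admits the semigroup characterization of Bakry--\'Emery curvature \cite{hua2017stochastic,gong2015properties}. Differentiating $t\mapsto\euler^{-2Kt}P_t\Gamma(P_{s-t}f)$ and using that $\Gamma_2(f)\ge K\Gamma(f)-(K+K_0)\,1_{V_0}\,\Gamma(f)$ pointwise (this single inequality encodes both $CD$-assumptions at once) yields the \emph{perturbed gradient estimate}
$$\Gamma(P_s f)\ \le\ \euler^{-2Ks}\,P_s\Gamma(f)\ +\ 2(K+K_0)\int_0^s\euler^{-2Kt}\,P_t\big(1_{V_0}\,\Gamma(P_{s-t}f)\big)\,dt,$$
valid for all bounded $f$ with bounded $\Gamma(f)$; when $N<\infty$ one keeps the dimension term $\tfrac1N(\Delta\,\cdot\,)^2$ and obtains a nonlinear (Riccati/Bakry--Qian type) variant.

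The key input is that heat started at $o$ barely reaches $V_0$ for times of order $\dist(o,V_0)$: writing $P_t=\euler^{-t\Deg_{\max}}\,\euler^{t(\Delta+\Deg_{\max})}$ with $\Delta+\Deg_{\max}$ a positivity-preserving operator of $\ell^\infty$-norm $\Deg_{\max}$, a count of walks from $o$ gives $P_t1_{V_0}(o)\le\euler^{-t\Deg_{\max}}\sum_{n\ge\dist(o,V_0)}(t\Deg_{\max})^n/n!$, a Poisson tail (in part (iv), where there is no degree bound, the analogous input is a Davies--Gaffney-type bound for the intrinsic metric $\rho$, which is the origin of the additive $R_\rho$ in $R$). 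Feeding this into the perturbed estimate, with the crude global bound $\Gamma(P_uf)\le\euler^{2K_0u}$ (from $CD(-K_0,\infty)$) used on $V_0$ and $\Gamma(f)\le1$, yields $\Gamma(P_sf)(o)\le\euler^{-2Ks}+q^{\,\dist(o,V_0)}\euler^{2K_0s}$ with $q=\Deg_{\max}/(\Deg_{\max}+2(K+K_0))<1$ (and its $N<\infty$ analogue). One then runs the semigroup proof of \cite{LiuMuenchP-16} at the point $o$: taking $f$ an intrinsic distance function realizing $\dist_\rho(o,V_0)$, normalized so that $\Gamma(f)\le1$, and integrating $|\Delta P_sf(o)|\le\sqrt{2\Deg_{\max}\,\Gamma(P_sf)(o)}$ (resp. the $N$-dependent bound, which requires no degree bound) over a finite horizon $T$ bounds $\dist_\rho(o,V_0)$ by the main term of $R$ plus an error controlled by $q^{\dist(o,V_0)}$ — a contradiction once $\dist(o,V_0)>R$.

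I expect the main obstacle to be exactly this last matching. Because the error term grows like $\euler^{2K_0 s}$ near $V_0$, the semigroup must be stopped at a finite time $T$ rather than sent to infinity; the delicate point — and the source of the explicit constants $18.2$ and $\euler^{2K_0/K}$ (resp. $\euler^{4K_0/K}$) in $R$ — is to choose $T$ (of order $1/K$, resp. of order $\pi\sqrt{N/(K+K_0)}$) large enough that the positive-curvature model has already collapsed $P_Tf$ near $o$, yet small enough that the $\euler^{2K_0 T}$ inflation of the $V_0$-error remains bounded, and then to transfer the resulting control on $P_Tf(o)$ back into a bound on $\dist_\rho(o,V_0)$ (e.g. by summing the gradient estimate along a shortest path from $o$ to $V_0$, on which $q^{\dist(\cdot,V_0)}$ decays geometrically). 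For $N<\infty$ this transfer must be carried out through a comparison of the perturbed gradient flow with the Bakry--Qian ODE, tracking carefully how the $1_{V_0}$-term inflates the comparison function.
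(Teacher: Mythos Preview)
Your handling of (i) and (ii) matches the paper. For (iii) and (iv), your overall architecture --- a perturbed gradient estimate with a $1_{V_0}$-error term, a heat-escape bound $P_t 1_{V_0}$, and a finite stopping time $T\sim 1/K$ --- is also the paper's architecture. Two points of divergence are worth noting. First, the paper proves (iv) directly and then obtains (iii) as a corollary by trading curvature for dimension via Proposition~\ref{prop:DistanceAndDimensionBoundedDegree} (i.e., $CD(K,\infty)\Rightarrow CD(K-s,2\Deg_{\max}/s)$) and taking $s=KK_0/(2(K+K_0))$; it does not use a Poisson tail. Second, the paper's heat-escape input is not a Davies--Gaffney bound but Theorem~\ref{thm:Pt1}, which comes straight from the global $CD(-K_0,N)$ inequality (\ref{eq:LMP16}) applied to $g=(1-\rho(x,\cdot)/R)_+$: one gets $|\Delta P_s g|\le(\sqrt{K_0}+1/\sqrt{2s})\sqrt{N P_s\Gamma g}$ and integrates in $s$. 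This is why finite $N$ is essential in (iv) and why $\sqrt{N/(K+K_0)}$ appears.

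There is, however, a genuine gap in your ``transfer'' step. You propose to pass from control of $P_Tf(o)$ back to a bound on $\rho(o,V_0)$ by summing the gradient estimate along a shortest path. This fails for a general intrinsic metric $\rho$: the implication $\Gamma h\le 1\Rightarrow |h(x)-h(y)|\le\rho(x,y)$ is \emph{not} available (it holds for the resistance metric $\sigma$, but only $\rho\le\sigma$ is true in general). Concretely, if $\Gamma P_Tf\le\alpha$ on the good region, an edge-by-edge sum only gives $|P_Tf(o)-P_Tf(y_0)|\le\sqrt{\alpha}\sum_{\text{edges}}\sqrt{2m(x)/w_{xy}}$, and this sum is not controlled by the $\rho$-length $r$. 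The paper circumvents this with a self-improvement device: it works with
\[
C_{\max}:=\sup\{\sup f : f(x_0)=0,\ \Gamma f\le 1,\ f\equiv\sup f \text{ on } V\setminus B_r^\rho(x_0)\},
\]
which plays the role of a \emph{localized resistance distance} from $x_0$ to $V\setminus B_r^\rho(x_0)$ and satisfies $C_{\max}\ge r$. One then cuts off $P_Tf$ to a function $g$ constant outside $B_r^\rho(x_0)$ with $\Gamma g\le\Gamma P_Tf\le\alpha$, so that $g/\sqrt{\alpha}$ (shifted) lies back in the admissible class and hence $g_{\max}-g(x_0)\le C_{\max}\sqrt{\alpha}$. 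Combined with the time-integral bounds on $|f-P_Tf|$ at $x_0$ and at a boundary point $y_0$, this yields $C_{\max}\le \text{(integral)} + C_{\max}\sqrt{\alpha}$, from which $r\le C_{\max}$ is bounded once $\alpha<1$. This cut-off-and-reinsert step (and the need to take the infimum defining $g_{\max}$ only over $B_{r+R_\rho}^\rho(x_0)\setminus B_r^\rho(x_0)$, which is also the source of the additive $R_\rho$) is the missing idea in your proposal; without it the argument does not close for arbitrary intrinsic $\rho$.
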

 Note that (i) and (ii) in the above theorem are included in \cite{LiuMuenchP-16} since every intrinsic metric is dominated by the resistance metric and, therefore, $$\diam_\rho(G)\leq \diam_\sigma(G).$$ 
 We also point out that any locally finite graph with $\Deg_{\max}<\infty$ is complete by Proposition \ref{prop:DistanceAndDimensionBoundedDegree} and Remark \ref{remark:intrinsic} (iii).
 \section{CD conditions and semigroups}
 By the spectral calculus, we can associate to $\Delta$ the heat semigroup $(P_t)_{t\geq 0}$. Using a standard argument, we derive a commutation formula for the semigroup and the gradient depending on the set of negatively curved vertices.
 \begin{proposition}\label{gradientsg}
 Let $G=(V,w,m)$ be a weighted, complete graph with non-degenerate vertex measure $m$, and $N\in(0,\infty]$. Define $$V_0:=\{x\in V\mid K_{G,x}(N)\leq 0\}.$$  Let $K>0,K_0\geq 0$ such that $$CD(-K_0,N,x)\quad \forall \, x\in V\quad \text{and}\quad CD(K,N,x)\quad\forall \, x\in V\setminus V_0.$$
 Then for any bounded function $f\colon V\to \RR$ with bounded $\Gamma f$,
 \begin{align}\label{prop:sgc}
  \Gamma(P_Tf)(x)\leq &\euler^{-2KT}P_T(\Gamma f)(x)-\frac{1-e^{-2KT}}{KN}(\Delta P_Tf)^2(x)\notag\\
  &+2(K_0+K)\Vert\Gamma f\Vert_\infty\euler^{2K_0T}\int_0^T\euler^{-2(K+K_0)s}P_s1_{V_0}(x)\drm s.
 \end{align}
\end{proposition}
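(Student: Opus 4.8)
The plan is to adapt the classical Bakry--\'Emery $\Gamma$-calculus semigroup interpolation argument, tracking carefully how the curvature defect on $V_0$ enters. Fix $T>0$ and $x\in V$, and for $s\in[0,T]$ consider the interpolation function
$$
\phi(s):=\euler^{-2Ks}P_s\bigl(\Gamma(P_{T-s}f)\bigr)(x),
$$
so that $\phi(0)=\Gamma(P_Tf)(x)$ and $\phi(T)=\euler^{-2KT}P_T(\Gamma f)(x)$. Differentiating and using $\partial_s P_s = \Delta P_s$, $\partial_s P_{T-s}f = -\Delta P_{T-s}f$, together with the identity $\Delta\Gamma(g)-2\Gamma(g,\Delta g)=2\Gamma_2(g)$, one gets
$$
\phi'(s)=\euler^{-2Ks}P_s\Bigl(2\Gamma_2(P_{T-s}f)-2K\,\Gamma(P_{T-s}f)\Bigr)(x).
$$
Now I split the $\Gamma_2$ term according to whether the base point of $P_s$ lies in $V_0$ or not. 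On $V\setminus V_0$ we have $CD(K,N)$, giving $\Gamma_2(g)\ge K\Gamma(g)+\tfrac1N(\Delta g)^2$ pointwise; on $V_0$ we only have $CD(-K_0,N)$, giving $\Gamma_2(g)\ge -K_0\Gamma(g)+\tfrac1N(\Delta g)^2\ge K\Gamma(g)+\tfrac1N(\Delta g)^2-(K+K_0)\Gamma(g)$. Hence everywhere
$$
\Gamma_2(g)-K\Gamma(g)\ \ge\ \frac1N(\Delta g)^2-(K+K_0)\,\Gamma(g)\,1_{V_0},
$$
and since $\Gamma(P_{T-s}f)\le\|\Gamma f\|_\infty$ (the semigroup is a contraction on $\Gamma$, or use the trivial bound $\Gamma(P_t f)\le P_t\Gamma f\le \|\Gamma f\|_\infty$ valid under $CD(0,\infty)$, which holds as $-K_0\le 0$), we obtain
$$
\phi'(s)\ \ge\ \frac{2}{N}\,\euler^{-2Ks}P_s\bigl((\Delta P_{T-s}f)^2\bigr)(x)\ -\ 2(K+K_0)\|\Gamma f\|_\infty\,\euler^{-2Ks}P_s1_{V_0}(x).
$$

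Next I would control the two terms on the right. For the first, Jensen's inequality for the Markov operator $P_s$ gives $P_s\bigl((\Delta P_{T-s}f)^2\bigr)\ge (P_s\Delta P_{T-s}f)^2=(\Delta P_T f)^2(x)$, a quantity independent of $s$; integrating $\euler^{-2Ks}$ over $[0,T]$ yields the factor $\tfrac{1-\euler^{-2KT}}{2K}$, and multiplying by $\tfrac2N$ produces exactly the term $\tfrac{1-\euler^{-2KT}}{KN}(\Delta P_Tf)^2(x)$. For the second term I need an upper bound for $\euler^{-2Ks}P_s1_{V_0}(x)$; here I use the crude but effective estimate $P_s1_{V_0}(x)\le \euler^{2K_0 s}P_s^{(0)}1_{V_0}(x)$ relating the true semigroup to... actually more simply: since we only need an upper bound and all that is being claimed is the displayed inequality with $P_s1_{V_0}$ appearing inside an integral against $\euler^{-2(K+K_0)s}$ and an overall factor $\euler^{2K_0T}$, I should instead rewrite the bound keeping $\euler^{-2Ks}$ and then insert $1 = \euler^{2K_0 s}\euler^{-2K_0 s}$, bounding $\euler^{-2K_0s}\le 1$ inside $[0,T]$... no --- to land on the stated form one bounds $\euler^{-2Ks}=\euler^{2K_0T}\euler^{-2(K+K_0)s}\euler^{-2K_0(T-s)}\le \euler^{2K_0T}\euler^{-2(K+K_0)s}$ since $T-s\ge 0$ and $K_0\ge 0$. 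Integrating from $\phi(0)$ to $\phi(T)$, i.e. $\phi(0)\le\phi(T)-\int_0^T\phi'(s)\,ds$, and rearranging gives precisely~\eqref{prop:sgc}.

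The main obstacle, and the place requiring care, is the rigorous justification of the differentiation under the semigroup and of the integration-by-parts/Bochner manipulations: these are formal on a general infinite graph and need the completeness hypothesis together with non-degeneracy of $m$ to make $P_t$ well-behaved (a symmetric Markov semigroup for which $\Delta$ is essentially self-adjoint, so that $P_{T-s}f$ stays in a domain where $\Gamma_2$ and the integration by parts identities are valid, and so that the $CD$ inequalities may legitimately be applied to $P_{T-s}f$). This is exactly the point where one invokes the semigroup characterization of Bakry--\'Emery curvature from \cite{hua2017stochastic, gong2015properties} alluded to just before the proposition; under those hypotheses $f$ bounded with $\Gamma f$ bounded is enough to run the computation, with all the quantities appearing being finite and the function $s\mapsto\phi(s)$ being $C^1$ on $[0,T]$. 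Once that analytic framework is in place, the curvature bookkeeping above is elementary. A secondary minor point is verifying $\Gamma(P_t f)\le \|\Gamma f\|_\infty$, which follows from the $s\mapsto\phi(s)$ argument itself run with $K$ replaced by $0$ and $K_0$ irrelevant (or directly from $CD(0,\infty)$, since $-K_0\le 0$ everywhere), so it does not introduce circularity.
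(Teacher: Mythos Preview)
Your interpolation setup, the pointwise curvature splitting
\[
\Gamma_2(g)-K\Gamma(g)\ \ge\ \tfrac1N(\Delta g)^2-(K+K_0)\,\Gamma(g)\,1_{V_0},
\]
and the use of Jensen for the $(\Delta P_Tf)^2$ term are all fine and match the paper's argument. The gap is in the bound you invoke for $\Gamma(P_{T-s}f)$ on $V_0$. You assert $\Gamma(P_t f)\le\|\Gamma f\|_\infty$, justified by ``$CD(0,\infty)$, which holds as $-K_0\le 0$.'' This is the wrong direction: the global hypothesis is $CD(-K_0,N)$ with $-K_0\le 0$, which is \emph{weaker} than $CD(0,\infty)$, not stronger. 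Under $CD(-K_0,N)$ the correct gradient bound (this is exactly \eqref{eq:LMP16} in the paper) is
\[
\Gamma(P_{T-s}f)\ \le\ \euler^{2K_0(T-s)}P_{T-s}\Gamma f\ \le\ \euler^{2K_0(T-s)}\|\Gamma f\|_\infty,
\]
and in general one cannot drop the exponential factor.

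Once you use this correct estimate, the factor $\euler^{2K_0T}\euler^{-2(K+K_0)s}$ appears \emph{naturally} as $\euler^{-2Ks}\cdot\euler^{2K_0(T-s)}$, and your ad hoc maneuver $\euler^{-2Ks}\le \euler^{2K_0T}\euler^{-2(K+K_0)s}$ becomes superfluous (indeed, that maneuver was only needed to artificially weaken a bound you never actually had). With this single correction your argument is essentially identical to the paper's proof; the paper just carries along the additional $(\Delta P_{T-s}f)^2$ term from \eqref{eq:LMP16} a bit longer before discarding it as nonnegative.
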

\begin{remark}
Let $G=(V,w,m)$ be a weighted, complete graph with non-degenerate vertex measure $m$. If $G$ satisfies $CD(K,N)$, $K\in \RR, N\in (0,\infty]$,
it was shown in \cite[Lemma 2.3]{LiuMuenchP-16} that for any bounded function $f\colon V\to \RR$ with bounded $\Gamma f$,
\begin{equation}\label{eq:LMP16}
\Gamma P_tf(x)\leq \euler^{-2Kt}P_t\Gamma f(x)-\frac{1-\euler^{-2Kt}}{KN}(\Delta P_tf)^2(x).
\end{equation}
So the estimate (\ref{prop:sgc}) is a refinement of (\ref{eq:LMP16}) in the setting of Proposition \ref{gradientsg}.
\end{remark}
\begin{proof}
 As in the classical Ledoux-ansatz, for a bounded function $f\colon V\to \RR$ and $T>0$, let
 $$F(s)=\euler^{-2Ks}P_s\Gamma P_{T-s}f(x)$$ and compute
 \begin{align*}
  F'(s)&=\euler^{-2Ks}\left(-2KP_s\Gamma P_{T-s}f(x)+\Delta P_s\Gamma P_{T-s}f(x)+2P_s\Gamma(P_{T-s}f,-\Delta P_{T-s}f)(x)\right)\\
  &=\euler^{-2Ks}P_s\left(-2K\Gamma P_{T-s}f+\Delta \Gamma P_{T-s}f-2\Gamma(P_{T-s}f,\Delta P_{T-s}f)\right)(x)\\
  &=\euler^{-2Ks}P_s\left(2\Gamma_2(P_{T-s}f)-2K\Gamma(P_{T-s}f)\right)(x).
  \end{align*}
  It is well known that the heat semigroup is generated by a smooth integral kernel, which is called the heat kernel. In particular, it can be proved that there is a pointwise minimal version, called $p: (0,\infty) \times V \times V \to \RR$, obtained via an exhaustion procedure by Dirichlet heat kernels on compact ($=$ finite) subsets of $V$ (see, e.g., \cite{linliu2015,aweber2010}).Therefore, we get
  \begin{align*}F'(s)
  &=2\euler^{-2Ks}\sum_{y\in V}p(s,x,y)\left[\Gamma_2(P_{T-s}f)(y)-K\Gamma(P_{T-s}f)(y)\right]\\
  &=2\euler^{-2Ks}\left[\sum_{y\in V\setminus V_0}p(s,x,y)\left[\Gamma_2(P_{T-s}f)(y)-K\Gamma(P_{T-s}f)(y)\right]\right.\\
  &\quad\left.+\sum_{y\in V_0}p(s,x,y)\left[\Gamma_2(P_{T-s}f)(y)+K_0\Gamma(P_{T-s}f)(y)\right]\right.\\
  &\quad\left.-(K+K_0)\sum_{y\in V_0}p(s,x,y)\Gamma(P_{T-s}f)(y)\right].
  \end{align*}
Applying (\ref{eq:LMP16}) to the last term above and applying the pointwise curvature dimension conditions to the first two terms, we have
\begin{align*}
 F'(s) &\geq 2\euler^{-2Ks}\left[\frac 1NP_s(\Delta P_{T-s}f)^2(x)\right.\\
 &\quad\left.-(K_0+K)\sum_{y\in V_0}p(s,x,y)\left(\euler^{2K_0(T-s)}P_{T-s}\Gamma f(y) +\frac{1-\euler^{2K_0(T-s)}}{K_0N}(\Delta P_{T-s}f)^2(y)\right)\right]\\
 &= -2(K+K_0)\euler^{2K_0T-2(K+K_0)s}\sum_{y\in V_0}P_{T-s}(\Gamma f)(y)p(s,x,y)\\
 &\quad +\frac{2\euler^{-2Ks}}{N}\left(P_s(\Delta P_{T-s}f)^2(x)+(K+K_0)\sum_{y\in V_0}p(s,x,y)\frac{\euler^{2K_0(T-s)}-1}{K_0}(\Delta P_{T-s}f)^2(y)\right).
\end{align*}
Jensen's inequality gives $$P_s(\Delta P_{T-s}f)^2(x)\geq (P_s\Delta P_{T-s}f)^2(x)=(\Delta P_Tf)^2(x).$$ Hence we have
\begin{align*}
 F'(s)
 &\geq -2(K+K_0)\euler^{2K_0T-2(K+K_0)s}\sum_{y\in V_0}P_{T-s}(\Gamma f)(y)p(s,x,y)+\frac{2\euler^{-2Ks}}{N}(\Delta P_Tf)^2(x)\\
 &\geq -2(K+K_0)\euler^{2K_0T-2(K+K_0)s}\Vert\Gamma f\Vert_\infty P_s1_{V_0}(x)+\frac{2\euler^{-2Ks}}{N}(\Delta P_Tf)^2(x).
\end{align*}

Therefore, we have
\begin{align*}
 F(T)-F(0)&=\euler^{-2KT}P_T(\Gamma f)(x)-\Gamma(P_Tf)(x)\\
 &=\int_0^TF'(s)\drm s \\
 &\geq -2(K+K_0)\euler^{2K_0T}\Vert\Gamma f\Vert_\infty\int_0^T\euler^{-2(K+K_0)s} P_s1_{V_0}(x)\drm s\\
 &\quad +\int_0^T\frac{2\euler^{-2Ks}}{N}\drm s(\Delta P_Tf)^2(x).
\end{align*}
Rearranging yields the claim.
\end{proof}
To control the distance to the negatively curved part $V_0$, we need to estimate $P_t 1_{V_0} (x)$ in terms of $\rho(x,V_0)$. This is given by the following theorem.

\begin{theorem}\label{thm:Pt1}
Let $G=(V,w,m)$ be a weighted, complete graph with non-degenerate vertex measure satisfying $CD(-K_0,N)$ for some $K_0 \geq 0$ and some $N>0$, let $\rho$ be an intrinsic metric, $x \in V$, and $W \subset V$ with $\rho(x,W) \geq R$. Then,
\begin{align}
P_t 1_W (x) \leq  \frac{\sqrt N}{R} \left(t\sqrt {K_0} + \sqrt{2t} \right) \label{eq:Pt1W leq}.
\end{align}

\end{theorem}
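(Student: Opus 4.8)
The plan is to compare $1_W$ against a Lipschitz cutoff built from the intrinsic distance to $x$ and then invoke the $CD(-K_0,N)$ gradient estimate (\ref{eq:LMP16}), which applies here verbatim with $K$ replaced by $-K_0$ since $G$ is complete with non-degenerate vertex measure; the bound (\ref{eq:Pt1W leq}) will then drop out of the Duhamel formula. This is the discrete counterpart of a classical heat-kernel escape estimate.

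First I would set $g:=\frac1R\min\{\rho(x,\cdot),R\}$. Since $\rho(x,w)\ge\rho(x,W)\ge R$ for every $w\in W$, we have $0\le 1_W\le g\le 1$ and $g(x)=0$; and since $t\mapsto\frac1R\min\{t,R\}$ is $\frac1R$-Lipschitz, $|g(y)-g(z)|\le\frac1R|\rho(x,y)-\rho(x,z)|$, whence $\Gamma g(z)\le\frac1{R^2}\Gamma\rho(x,\cdot)(z)\le\frac1{R^2}$ for all $z$ because $\rho$ is intrinsic. In particular $g$ is bounded with bounded $\Gamma g$, so (\ref{eq:LMP16}) is available for $g$. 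Using positivity of $P_t$, the identity $P_t g=g+\int_0^t\Delta P_s g\,\drm s$, and $g(x)=0$, we obtain $P_t 1_W(x)\le P_t g(x)=\int_0^t\Delta P_s g(x)\,\drm s\le\int_0^t|\Delta P_s g(x)|\,\drm s$.

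Next I would bound $|\Delta P_s g(x)|$ pointwise. Rearranging (\ref{eq:LMP16}) with $K=-K_0$ and discarding the non-negative term $\Gamma P_s g(x)$ gives $\frac{e^{2K_0 s}-1}{K_0 N}(\Delta P_s g)^2(x)\le e^{2K_0 s}P_s\Gamma g(x)\le e^{2K_0 s}/R^2$ when $K_0>0$, and the limiting inequality $\frac{2s}{N}(\Delta P_s g)^2(x)\le 1/R^2$ when $K_0=0$. Writing $a:=2K_0 s$, both cases read $(\Delta P_s g)^2(x)\le\frac{N}{2sR^2}\cdot\frac{a}{1-e^{-a}}$, with the convention $\frac{a}{1-e^{-a}}\big|_{a=0}=1$. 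The only genuinely new computation is the elementary inequality $\frac{a}{1-e^{-a}}\le 1+a$ for $a\ge 0$, equivalently $(1+a)e^{-a}\le 1$, which is just $e^a\ge 1+a$; combined with $\sqrt{u+v}\le\sqrt u+\sqrt v$ this gives $|\Delta P_s g(x)|\le\frac{\sqrt N}{R}\left(\sqrt{K_0}+\frac1{\sqrt{2s}}\right)$.

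Finally, integrating over $s\in(0,t)$ yields $P_t 1_W(x)\le\frac{\sqrt N}{R}\int_0^t\left(\sqrt{K_0}+\frac1{\sqrt{2s}}\right)\drm s=\frac{\sqrt N}{R}\left(t\sqrt{K_0}+\sqrt{2t}\right)$, which is (\ref{eq:Pt1W leq}). I do not anticipate a serious obstacle: the substance of the argument is the gradient estimate (\ref{eq:LMP16}) together with the two convexity-type inequalities above. The point that deserves care is the verification that the cutoff $g$ is an admissible input for (\ref{eq:LMP16}) — bounded, with bounded $\Gamma g$, and satisfying the Duhamel identity pointwise at $x$ — which is precisely what the standing completeness and non-degeneracy hypotheses guarantee, and, for a uniform treatment, handling $K_0=0$ as the limit of the $K_0>0$ case.
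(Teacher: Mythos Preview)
Your proposal is correct and follows essentially the same route as the paper: bound $|\Delta P_s g|$ via (\ref{eq:LMP16}) with $K=-K_0$ and the elementary inequality $e^a\ge 1+a$, then integrate in $s$. The only cosmetic difference is that the paper uses the complementary cutoff $g=(1-\rho(x,\cdot)/R)_+$ (so $g(x)=1$, $g+1_W\le 1$, and one bounds $P_t1_W(x)\le 1-P_tg(x)$), whereas you take $g=\min\{\rho(x,\cdot)/R,1\}=1-(1-\rho(x,\cdot)/R)_+$; since $\Gamma$ and $|\Delta P_s\,\cdot\,|$ agree for a function and its negative, the two computations are identical.
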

\begin{proof}
From (\ref{eq:LMP16}), we have for any bounded function $g$ with bounded $\Gamma g$,
$$
\frac{e^{2{K_0}s} - 1}{{K_0}N}\left(\Delta P_s g \right) ^2 \leq e^{2{K_0}s} P_s \Gamma g.
$$
Hence,
\begin{align}
\left|\Delta P_s g \right| &\leq \sqrt{\frac{{K_0}N}{1 - e^{-2{K_0}s}}} \sqrt{P_s\Gamma g} \leq \sqrt{\frac{{K_0}N}{1 - \frac{1}{1+2{K_0}s}}} \sqrt{P_s\Gamma g} = \sqrt{{K_0} + \frac{1}{2s}} \sqrt{N P_s\Gamma g} \nonumber\\&\leq \left(\sqrt {K_0} + \frac{1}{\sqrt{2s}} \right)\sqrt{N P_s\Gamma g}. \label{eq: LPsg leq PsGammag}
\end{align}
Let $g:= \left(1- \frac{\rho(x,\cdot)}{R} \right)_+$.
Then, $\Gamma g \leq 1/R^2$ and $g + 1_W \leq 1$ and thus by (\ref{eq: LPsg leq PsGammag}),
\begin{align*}
P_t 1_W (x)
&\leq 1-P_t g(x)
\leq \int_0^t |\Delta P_s g(x)|   ds \\
&\leq 	\int_0^t \left(\sqrt {K_0} +\frac{1}{\sqrt{2s}} \right)\sqrt{N P_s\Gamma g(x)} ds \\&
\leq \frac{\sqrt N}{R} \left(t\sqrt {K_0} + \sqrt{2t} \right).
\end{align*}
%
This finishes the proof.
\end{proof}

We show that a Bonnet-Myers type diameter bound still holds if one allows some negative curvature. In contrast to Bonnet-Myers, we will bound the distance to the negatively curved part $V_0$ of the graph from above, which proves part (iv) of Theorem \ref{thm:main}.

\begin{theorem}\label{thm:TubeNegativeCurvatureFiniteDimension}
	Let $G=(V,w,m)$ be a connected graph with non-degenerate vertex measure $m$, $K,K_0 >0$. Let $\emptyset \neq V_0 \subset V$.
	Suppose $G$ satisfies $$CD(K,N,x)\quad\forall \, x\in V \setminus V_0\quad\text{and}\quad CD(-K_0,N,x)\quad \forall \,x\in V_0.$$
	Let $\rho$ be an intrinsic metric with finite jump size $R_\rho>0$. Suppose $G$ is complete.
	Then for all $x_0 \in V$, one has
	\begin{align*}
	\rho(x_0,V_0) \leq R_\rho + 18.2e^{2K_0/K} \sqrt{\frac{N}{K_0+K}}.
	\end{align*}
	
\end{theorem}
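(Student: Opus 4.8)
The plan is to run the Ledoux-type argument backwards, starting from the refined gradient estimate in Proposition~\ref{gradientsg} and combining it with the decay bound on $P_t 1_{V_0}$ from Theorem~\ref{thm:Pt1}. Fix $x_0 \in V$ with $R := \rho(x_0, V_0)$ large, and set $f := (1 - \rho(x_0,\cdot)/R_\rho)_+$ truncated appropriately — actually, following the scheme in \cite{LiuMuenchP-16}, I would take a test function $f$ which is $1$ at $x_0$ and supported in a small $\rho$-ball of radius $R_\rho$ around $x_0$, so that $\|\Gamma f\|_\infty \le 1/R_\rho^2$ and $f$ vanishes on $V_0$. The point of Bonnet-Myers via semigroups is that one evaluates the commutation inequality \eqref{prop:sgc} at the point $x_0$: there $\Gamma(P_T f)(x_0) \ge 0$ trivially, while the right-hand side, for $N = \infty$ so the middle term drops (for part (iv) one keeps the $(\Delta P_T f)^2$ term and uses it, or one reduces to the $N=\infty$ bound via a dimension trade-off), forces
\[
0 \le e^{-2KT} P_T(\Gamma f)(x_0) + 2(K_0+K)\|\Gamma f\|_\infty e^{2K_0 T}\int_0^T e^{-2(K+K_0)s} P_s 1_{V_0}(x_0)\,ds.
\]
This is automatically true and gives nothing; the actual argument must instead test at a point \emph{far} from $x_0$, or compare $P_T f$ at two points. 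So the real structure is: pick $f$ as above with $f(x_0) = 1$; then for any $y$, $P_T f(x_0) - P_T f(y) = \int_\gamma \ldots$ along a $\rho$-geodesic, and $|P_T f(x_0) - P_T f(y)| \le \rho(x_0,y)\sup_V \sqrt{\Gamma P_T f}$. Meanwhile $P_T f(x_0) \to f(x_0) = 1$ as $T \to 0$ but $P_T f \to 0$ pointwise away from the support as... no — I would instead use that for large $T$, $P_T f$ flattens out, so $|P_T f(x_0) - P_T f(y)|$ is small, while $\sqrt{\Gamma P_T f}$ is controlled by \eqref{prop:sgc}.

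Concretely, the key steps in order: (1) Choose $f$ with $f(x_0) = 1$, $0 \le f \le 1$, $\supp f \subset B^\rho_{R_\rho}(x_0)$, so $\|\Gamma f\|_\infty \le 1/R_\rho^2$ and, crucially, $\Gamma f$ vanishes outside $B^\rho_{2R_\rho}(x_0)$. (2) Apply Proposition~\ref{gradientsg}: bound $\Gamma(P_T f)(x)$ at an arbitrary $x$. For the first term $e^{-2KT}P_T(\Gamma f)(x)$, estimate $P_T(\Gamma f)(x) \le \|\Gamma f\|_\infty P_T 1_{B^\rho_{2R_\rho}(x_0)}(x)$, and if $\rho(x,x_0)$ is large this is small by Theorem~\ref{thm:Pt1}; for the second term, bound $P_s 1_{V_0}(x) \le$ (decay in $\rho(x,V_0) \ge \rho(x,x_0) - R_\rho$ ... or just $\ge R - \rho(x_0,x)$) again via Theorem~\ref{thm:Pt1}, and integrate the resulting expression in $s$ against $e^{2K_0 T} e^{-2(K+K_0)s}$. (3) Take the supremum over $x$ appropriately — or more cleanly, bound $\sqrt{\Gamma P_T f}(x)$ uniformly over $x$ in the relevant region by something like $C(T)/\sqrt{R}$ where $C(T) = e^{-KT}/R_\rho + (\text{const})(K_0+K)^{1/2} e^{2K_0 T} \cdot (\text{stuff})$. (4) Now use $1 = f(x_0) = \lim_{T\to\infty}(P_T f(x_0) - (\text{average})) $... the cleanest route: since $P_T f(x_0) - P_T f(z) \le \rho(x_0,z)\sup\sqrt{\Gamma P_T f}$ for all $z$, and one can find $z$ with $P_T f(z)$ arbitrarily small (e.g. using mass conservation or that $\|P_T f\|_1$ is controlled, pick $z$ far away where $P_T f$ is tiny by Theorem~\ref{thm:Pt1} applied to the support of $f$), one gets $P_T f(x_0) \le \rho(x_0, V_0)\cdot \sup\sqrt{\Gamma P_T f} + o(1)$, i.e., roughly $P_T f(x_0) \lesssim R \cdot C(T)/\sqrt{R} = \sqrt{R}\, C(T)$ — dimensionally this doesn't immediately close, so one actually needs the sharper bookkeeping: $\sup_{B^\rho_{R_\rho}(x_0)}\sqrt{\Gamma P_T f} \ge$ (something giving $P_T f(x_0) - P_T f(\partial) \ge$ a definite fraction), and feed back $\rho(x_0,V_0) = R$.

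(5) Optimize over $T > 0$. The upshot is an inequality of the form $1 \lesssim R_\rho \cdot [\text{best-}T\text{ value of } C(T)] + (\text{the }V_0\text{-term, which is }\lesssim \sqrt{N/(K+K_0)}\,e^{2K_0/K}/R)$, which rearranges to $R \le R_\rho + 18.2\, e^{2K_0/K}\sqrt{N/(K_0+K)}$; the numerical constant $18.2$ comes from carrying out the $T$-optimization of $e^{2K_0 T}\int_0^T e^{-2(K+K_0)s}(s\sqrt{K_0} + \sqrt{2s})\,ds$ (after the substitution $T \mapsto T/(K+K_0)$ the bound depends on $K_0/K$ through the ratio $K_0/(K+K_0)$ and on $N/(K+K_0)$ by scaling) and pinning down the worst constant, presumably taking $T = 2K_0/(K(K+K_0))$ or similar so the exponential becomes $e^{4K_0/\dots}$ — matching the $e^{2K_0/K}$ in the statement suggests the optimal choice makes $K_0 T \sim K_0/K$.

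\medskip

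\emph{Main obstacle.} The delicate point is Step (3)–(4): turning the pointwise gradient bound \eqref{prop:sgc}, which has an $x$-dependent right-hand side through $P_T(\Gamma f)(x)$ and $\int_0^T P_s 1_{V_0}(x)\,ds$, into a genuinely \emph{uniform} bound on $\sqrt{\Gamma P_T f}$ over all of $V$ (or over a ball large enough to contain a near-geodesic from $x_0$ to where $P_T f$ is small) with the correct $1/\sqrt{R}$-type decay, while keeping the constant sharp. One has to be careful that $\rho(x,V_0)$ need not be large when $x$ is far from $x_0$ on the "wrong side", so the estimate $P_s 1_{V_0}(x) \le 1$ is all that's available there — but those $x$ don't contribute because $\Gamma f$ is supported near $x_0$ and, for the first term, one exploits $P_T(\Gamma f)(x) \le P_T 1_{\supp \Gamma f}(x)\|\Gamma f\|_\infty$ with $\rho(x,\supp\Gamma f)$ large. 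Balancing these two regimes, plus the $T$-optimization to extract the exact constant $18.2$, is where the real work (and the bulk of the calculation in \cite{LiuMuenchP-16}-style) lies; everything else is assembling Propositions~\ref{gradientsg}, \ref{prop:DistanceAndDimensionBoundedDegree} and Theorem~\ref{thm:Pt1}.
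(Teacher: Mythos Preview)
Your proposal has the right raw materials (Proposition~\ref{gradientsg} and Theorem~\ref{thm:Pt1}) but misses the mechanism that actually closes the estimate. Two concrete gaps:

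\textbf{(a) The $(\Delta P_Tf)^2$ term is not optional.} You suggest dropping it (``for $N=\infty$ the middle term drops''), but in the finite-$N$ theorem this term is exactly what lets you control $|\partial_t P_t f| = |\Delta P_t f|$, and hence $|P_Tf - f| \le \int_0^T |\partial_t P_t f|\,dt$. The paper extracts \emph{two} consequences from \eqref{prop:sgc} on the region $V\setminus T^\rho_R(V_0)$: a bound on $\Gamma P_T f$ (throw away the $\Delta$-term) and a bound on $|\Delta P_T f|$ (throw away $\Gamma P_T f$). Both are needed. Your scheme never recovers $f$ from $P_T f$, which is why the dimensional count doesn't close.

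\textbf{(b) The missing self-referential step.} The paper does not use a single fixed test function supported in $B^\rho_{R_\rho}(x_0)$. Instead it writes $\rho(x_0,V_0) = r + R_\rho + R$ (with $R$ a free parameter), defines the class
\[
\mathcal F = \{f : f(x_0)=0,\ \Gamma f \le 1,\ f \equiv \sup f \text{ on } V\setminus B_r(x_0)\},
\]
sets $C_{\max} = \sup_{f\in\mathcal F}\sup f \ge r$, and picks a near-optimizer $f$. Then $P_T f$ is \emph{cut off} to produce $g$ constant outside $B_r(x_0)$; crucially $\Gamma g \le \Gamma P_T f$, and since $B_{r+R_\rho}(x_0) \subset V\setminus T^\rho_R(V_0)$ one has $\Gamma g \le e^{-2KT} + \frac{\sqrt N}{R}H(K,K_0,T)$ there (and $\Gamma g = 0$ elsewhere). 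Rescaling $g$ puts it back in $\mathcal F$, yielding $g_{\max} - g(x_0) \le C_{\max}\sqrt{e^{-2KT} + \frac{\sqrt N}{R}H}$. Combined with the time-derivative bound from (a), the triangle inequality gives
\[
C_{\max} \le 2\int_0^T |\partial_t P_t f|\,dt + C_{\max}\sqrt{e^{-2KT} + \tfrac{\sqrt N}{R}H},
\]
which \emph{solves for} $C_{\max}$ once the square root is $<1$. Choosing $T = 1/K$ and $R = 4\sqrt{N}H(K,K_0,1/K)$ makes the square root equal $\sqrt{e^{-2}+1/4}$ and produces the constant $18.2$ after bounding $R$ via the explicit integral for $H$. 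Your comparison $P_Tf(x_0) - P_Tf(z)\le\rho(x_0,z)\sup\sqrt{\Gamma P_Tf}$ never gets this self-improving structure, and the ``uniform $\sqrt{\Gamma P_Tf}$ over all of $V$'' you flag as the obstacle is sidestepped in the paper precisely by the cutoff-and-feed-back trick, not by estimating $P_s 1_{V_0}(x)$ at points $x$ far from $x_0$.
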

\begin{remark}
	In case of bounded vertex degree, we can drop the non-degeneracy assumption of $m$.
\end{remark}

\begin{proof}
	
%
	
	By Theorem \ref{thm:Pt1}, we have $P_s1_{V_0} \leq  \frac{\sqrt N}{\rho(V_0,\cdot)} \left(s\sqrt {K_0} + \sqrt{2s} \right)$.
	Thus, Proposition \ref{gradientsg} implies that we have for any bounded function $f: V\to \RR$ with bounded $\Gamma f$,
%
	\begin{align}
	e^{-2KT} P_T \Gamma f - \Gamma P_T f
	&\geq  (\Delta P_T f)^2 \frac{1-e^{-2KT}}{KN} -  \frac {\sqrt{N}}{\rho(V_0,\cdot)} H(K,K_0,T)\Vert\Gamma f\Vert_\infty \label{eq:integrate F'}
	\end{align}
	with
	$$
	H(K,K_0,T) :=  2(K_0+K)e^{2K_0 T} \int_0^T
	e^{-2(K_0+K)s}\left(s\sqrt {K_0} + \sqrt{2s} \right) ds.
	$$

	
	On $V \setminus T^\rho_R(V_0)
	$, we have $\rho(V_0,\cdot) \geq R$.
	Activating  (\ref{eq:integrate F'}) towards $|\Delta P_T f|=|\partial_T P_T f|$ and throwing away the nonnegative term $\Gamma P_T f$ yields
	\begin{align}
	|\partial_T P_T f| \leq \sqrt{\left(e^{-2KT} + \frac{\sqrt{N}}{R}H(K,K_0,T)\right)\Vert\Gamma f\Vert_\infty} \sqrt{\frac{KN}{1-e^{-2KT}}}\label{eq: dtPtf leq sqrt}
	\end{align}	on $V\setminus T^\rho_R(V_0)$.
	On the other hand, activating  (\ref{eq:integrate F'}) towards $\Gamma P_T f$ and throwing away the nonnegative term $(\Delta P_T f)^2 \frac{1-e^{-2KT}}{Kn}$ yields
	\begin{align}
	\Gamma P_T f \leq \left({e^{-2KT} + \frac{\sqrt{N}}{R}H(K,K_0,T)}\right)\Vert\Gamma f\Vert_\infty \label{eq: GammaPTf leq Gamma plus H}
	\end{align}
	on $V\setminus T^\rho_R(V_0)$.
	
	This gives good control on the time derivative and gradient of the semigroup.

	We estimate
	\begin{align}
	H(K,K_0,T)&\leq  2(K_0+K)e^{2K_0 T} \int_0^\infty e^{-2(K_0+K)s}\left(s\sqrt {K_0} + \sqrt{2s} \right) ds\notag
	\\&
	=2(K_0+K)e^{2K_0 T}\cdot \frac 1 4  \left(\frac{\sqrt{K_0}}{(K_0+K)^2} + \frac{\sqrt{\pi}}{(K_0+K)^{3/2}} \right)\notag\\
	&= e^{2K_0 T}\cdot \frac 1 2  \left(\frac{\sqrt{K_0}}{K_0+K} + \sqrt{\frac{\pi}{K_0+K}} \right).\label{eq:Hesti}
	\end{align}
	Moreover, for $t<T$, one has
	\begin{align}
	H(K,K_0,t) &\leq 2(K_0+K) e^{2K_0 t} \int_0^T e^{-2(K_0+K)s} \left(s\sqrt {K_0} + \sqrt{2s} \right) ds \nonumber\\& = e^{2K_0(t-T)}H(K,K_0,T) \label{eq: Ht HT}.
	\end{align}
	
	By assumption, one has $\rho(x,y)\leq R_\rho$ whenever $x \sim y$.
	
	We fix $T, R,r>0$ and $x_0 \in V$.
	We suppose $\rho(x_0,V_0)= R+R_\rho + r$. Our aim is to show that $$\rho(V_0,x_0) \leq R_\rho + 18.2 e^{2K_0/K} \sqrt{\frac{N}{K_0+K}}. $$
	
	Let us explain the strategy of the remaining proof first. We will consider functions $f$ with $\Gamma f \leq 1$ and being constant outside of $B_r(x_0)$. We need the additional distance $R$ to have reasonable estimates for $\partial P_t f$ and $\Gamma P_t f$ for all vertices in $B_{r+R_\rho}(x_0)$. The $R_\rho$ is needed to separate $B_r(x_0)$ and $T^\rho_R(V_0)$, i.e., to guarantee that there are no edges connecting two vertices from the two sets respectively.

	The distance $R$ will be chosen later to ensure that the term $\frac {\sqrt{N}}{R} H(K,K_0,T)$ is small enough to obtain good estimates for $r$.

	Let us denote
	$$
	\mathcal F:= \{f \in C(V):f(x_0)=0\mbox{ and } \Gamma f \leq 1 \mbox{ and } f(y) = \sup f<\infty,\,\, \forall\, y \in V\setminus B_r(x_0)\}.
	$$
	We write $C_{\max}:=\sup\{f(y):f \in \mathcal F, y \in V\}< \infty$ due to connectedness.

	The idea is to take $f\in \mathcal F$ such that $\sup f$ is (close to be) maximal.
	Then, we take $P_t f$ and cut it off appropriately such that its cut-off version $h $ belongs to $\mathcal F$.
	By the estimate (\ref{eq: dtPtf leq sqrt}) for $|\partial_t P_t f|$, we can upper bound $|P_t f - f|$ outside the tube $T^\rho_R(V_0)$. On the other hand for $y_0 \notin B_r(x_0)$, we can upper bound
$|P_t f(x_0) - P_t f (y_0)|$ by the estimate (\ref{eq: GammaPTf leq Gamma plus H}) for $\Gamma P_t f$.
	By triangle inequality, we can thus upper bound (the cut-off version of)
$$|f(y_0)-P_tf(y_0)+P_tf(y_0)-P_tf(x_0)+P_tf(x_0)-f(x_0)|=|f(y_0)-f(x_0)|.$$
Notice that $ |f(y_0)-f(x_0)|\approx C_{\max} \geq r$, this leads to an upper estimate for $r$ when choosing $R$ and $T$ appropriately.

	The reason, why we take $C_{\max}$ as a substitute for the distance $\rho$, is that we need to forth- and back estimate between the distance and the gradient. The problem is that for $\rho$, we do not always have $f(x)- f(y) \leq \rho(x,y)$ when only assuming $\Gamma f \leq 1$.
	To avoid this problem, we take a certain resistance metric  between $x_0$ and $V \setminus B_r(x_0)$ given by $C_{\max}$.

	We now give the details.
	Let $\varepsilon>0$.
	We choose
	$f \in \mathcal F$ such that $C:=\sup f \geq C_{\max} - \varepsilon$.
	W.l.o.g., we can assume that $f \geq 0$.
	We have $C_{\max} \geq r$ since the function $\tilde{f}:=\min(\rho(x_0,\cdot),r) \in \mathcal F$ and $\sup \tilde{f} = r$ .



	Now, we set $g_{\max} :=\inf \{P_T f(y):y \in B_{r+R_\rho}(x_0) \setminus B_r(x_0)\}$ and
	$$g(x):= \begin{cases}
	P_T f(x) \wedge g_{\max}  &:x \in B_r(x_0) \\
	g_{\max} &: else
	\end{cases},
	$$where $a\wedge b:=\min\{a,b\}$ for $a,b\in \RR$.
	We remark that $B_{r+R_\rho}(x_0) \setminus B_r(x_0) \neq \emptyset$ due to the $R_\rho$ assumption (that is, $\rho(x,y)\leq R_\rho$ for $x\sim y$) and since $V_0 \neq \emptyset$ and since there is a path from $x_0$ to $V_0$ due to connectedness.
	The reason why we take the infimum over $B_{r+R_\rho}(x_0) \setminus B_r(x_0)$ and not over $V \setminus B_r(x_0)$ is that we want to control
	$|P_T f(y_0) - f(y_0)|$ at $y_0$ where the infimum of $P_Tf(\cdot)$ is almost attained. But this only works if $y_0$ is far away from the negatively curved set $V_0$.

In fact, we have 
\begin{equation}\label{eq: g}
(g(y)-g(z))^2\leq (P_Tf(y)-P_Tf(z))^2,\,\,\,\,\text{for all }\,\,y,z \in V.
\end{equation}
We can check (\ref{eq: g}) as follows. When $y,z\in B_r(x_0)$, we have
$$(g(y)-g(z))^2= (P_Tf(y)\wedge g_{\max}-P_Tf(z)\wedge g_{\max})^2\leq (P_Tf(y)-P_Tf(z))^2.$$
When one of the two vertices $y$ and $z$ lies in $B_r(x_0)$ and the other one lies outside $B_r(x_0)$, say $y\in B_r(x_0)$ and $z\in V\setminus B_r(x_0)$, we have
$$(g(y)-g(z))^2=(P_Tf(y)\wedge g_{\max}-g_{\max})^2.$$
In case that $P_Tf(y)\leq g_{\max}$, we have by the definition of $g_{\max}$ that $$(P_Tf(y)\wedge g_{\max}-g_{\max})^2=(P_Tf(y)-g_{\max})^2\leq (P_Tf(y)-P_Tf(z))^2.$$
Otherwise when $P_Tf(y)>g_{\max}$, we have $$(P_Tf(y)\wedge g_{\max}-g_{\max})^2=0\leq (P_Tf(y)-P_Tf(z))^2.$$
When $y,z\in V\setminus B_r(x_0)$, we have 
$$(g(y)-g(z))^2=(g_{\max}-g_{\max})^2=0\leq (P_Tf(y)-P_Tf(z))^2.$$
This finishes the verification of (\ref{eq: g}).

We obtain directly from (\ref{eq: g}) that 
$$\Gamma g\leq \Gamma P_Tf.$$
We observe that for all $y \in V \setminus B_{r+R_\rho}(x_0)$, there is no neighbor of $y$ in $B_r(x_0)$ due to the $R_\rho$ assumption. Since $g$ is constant on  $V \setminus
B_r(x_0)$, we obtain $\Gamma g = 0$ on $V \setminus B_{r+R_\rho}(x_0)$. 

Using $(\ref{eq: GammaPTf leq Gamma plus H})$, we derive further that
	$$\Gamma g \leq  {e^{-2KT}  + \frac{\sqrt{N}}{R}H(K,K_0,T)},$$
where we used the property $\Gamma f\leq 1$. We will later choose $T$ and $R$ such that this bound of $\Gamma g$ is significantly smaller than one.

Setting the function $h: V\to \RR$ to be
	$$
	h:= g \bigg/\sqrt{e^{-2KT} + \frac{\sqrt{N}}{R}H(K,K_0,T)},
	$$
	we have $\Gamma h\leq 1$. Therefore $h - h(x_0) \in \mathcal F$ and thus, $\sup h - h(x_0) \leq C_{\max}$. Hence,
	\begin{align}
	g_{\max}-g(x_0) \leq C_{\max} \sqrt{e^{-2KT} + \frac{\sqrt{N}}{R}H(K,K_0,T)}. \label{eq: gy0 minus gx0}
	\end{align}
Let $y_0$ be a vertex in $B_{r+R_\rho}(x_0) \setminus B_r(x_0)$ such that $P_T f(y_0)-g(y_0) < \varepsilon$.
	We obtain by (\ref{eq: dtPtf leq sqrt})
	\begin{align}
	|f(y_0) - g(y_0)| &\leq |f(y_0) - P_T f(y_0)| + \varepsilon \leq \varepsilon + \int_0^T |\partial_t P_t f(y_0)| dt \nonumber \\
	&\leq   \varepsilon + \int_0^T \sqrt{e^{-2Kt}  + \frac{\sqrt{N}}{R}H(K,K_0,t)} \sqrt{\frac{KN}{1-e^{-2Kt}}} dt. \label{eq:fy0 minus gy0}
	\end{align}
	Analogously, we have
	\begin{align}
	|g(x_0) - f(x_0)| \leq  \int_0^T \sqrt{e^{-2Kt}  + \frac{\sqrt{N}}{R}H(K,K_0,t)} \sqrt{\frac{KN}{1-e^{-2Kt}}} dt. \label{eq:fx0 minus gx0}
	\end{align}
	Noticing that $f(y_0)=\sup f=C$ and putting together (\ref{eq: gy0 minus gx0}), (\ref{eq:fy0 minus gy0}) and (\ref{eq:fx0 minus gx0}) yield
	\begin{align*}
	C_{\max}-\varepsilon &\leq C = f(y_0)-f(x_0) \\&\leq |f(y_0) - g(y_0)| + |g(y_0)-g(x_0)| + |g(x_0)-f(x_0)| \\
	& \leq   \varepsilon + 2\int_0^T \sqrt{e^{-2Kt}  + \frac{\sqrt{N}}{R}H(K,K_0,t)} \sqrt{\frac{KN}{1-e^{-2Kt}}} dt   \\
	& \qquad +  C_{\max}  \sqrt{e^{-2KT} + \frac{\sqrt{N}}{R}H(K,K_0,T)}.
	\end{align*}
	Letting $\varepsilon$ tend to zero yields
	%
	\begin{equation}
	r \leq C_{\max} \leq \frac{2\int_0^T \sqrt{e^{-2Kt}  + \frac{\sqrt{N}}{R}H(K,K_0,t)} \sqrt{\frac{KN}{1-e^{-2Kt}}} dt}{1-\sqrt{e^{-2KT} + \frac{\sqrt{N}}{R}H(K,K_0,T)} } \label{eq: r upper estimate}
	\end{equation}
	whenever the denominator is positive.
	
	We set $T:=1/K$ and $R:=4\sqrt{N} H(K,K_0,1/K)$. Then the denominator of the RHS of (\ref{eq: r upper estimate}) is $1-\sqrt{e^{-2}+1/4}>0$. Observe that (\ref{eq:Hesti}) implies
	\begin{align}
	R\leq    2 \sqrt N e^{2K_0/K}  \left(\frac{\sqrt{K_0}}{K_0+K} + \sqrt{\frac{\pi}{K_0+K}} \right). \label{eq: R def and estimate}
	\end{align}
	Next, we estimate the numerator.
	By (\ref{eq: Ht HT}), we have for $t \leq T=1/K$ that
	\begin{align}
	\frac{\sqrt{N}}{R}H(K,K_0,t) = \frac{H(K,K_0,t)}{4H(K,K_0,T)} \leq \frac 1 4 \exp \left[-2K_0\left(\frac 1 K - t \right) \right] \leq \frac 1 4. \label{eq: nRH leq 1 over 4}
	\end{align}
Therefore, we obtain	
	\begin{align*}
	&\int_0^T \sqrt{e^{-2Kt}  + \frac{\sqrt{N}}{R}H(K,K_0,t)} \sqrt{\frac{KN}{1-e^{-2Kt}}} dt \\
	\leq & \int_0^T \left[\sqrt{e^{-2Kt}}  + \sqrt{\frac{\sqrt{N}}{R}H(K,K_0,t)} \right] \sqrt{\frac{KN}{1-e^{-2Kt}}} dt \\
	\leq &\int_0^\infty \sqrt{e^{-2Kt}}\sqrt{\frac{KN}{1-e^{-2Kt}}} dt  + \int_0^T \frac 1 2 \sqrt{\frac{KN}{1-e^{-2Kt}}} dt\\
	=& \frac \pi 2 \sqrt{\frac{N}{K}} +  \int_0^T \frac 1 2\sqrt{\frac{KN}{1-e^{-2Kt}}} dt\\
       =& \frac \pi 2 \sqrt{\frac{N}{K}} +\frac{1}{2} \sqrt{\frac{N}{K}} \int_0^1 \frac{d\tau}{\sqrt{1-e^{-2\tau}}}\\
	\leq&  \frac {\pi+\arctanh(\sqrt{1-e^{-2}})} 2   \sqrt{\frac{N}{K}}.
	\end{align*}
	Thus, (\ref{eq: r upper estimate}) implies that
	\begin{align*}
	r  \leq \frac{ (\pi+\arctanh(\sqrt{1-e^{-2}}))}{1 - \sqrt{e^{-2} + \frac 1 4}} \sqrt{\frac{N}{K}}.
	\end{align*}

Using  this and (\ref{eq: R def and estimate}) yields
	\begin{align}\label{eq: rho first estimate}
	\rho(x_0,V_0) =r + R_\rho + R \hspace*{0.7\textwidth} \nonumber & \\ \leq  \frac{ (\pi+\arctanh(\sqrt{1-e^{-2}}))}{1 - \sqrt{e^{-2} + \frac 1 4}} \sqrt{\frac{N}{K}} + R_\rho +   2 \sqrt N e^{2K_0/K}  \left(\frac{\sqrt{K_0}}{K_0+K} + \sqrt{\frac{\pi}{K_0+K}} \right). &
	\end{align}
	
	It is left to show that $e^{2K_0/K}\sqrt{\frac{N}{K_0 +K}}$ is the dominating term in the sum and to give the corresponding coefficient.
	
	We start with comparing the addends in the brackets of (\ref{eq: rho first estimate}): we have
	$$
	\frac{\sqrt{K_0}}{K_0+K} \leq \frac 1 {\sqrt{K_0+K}},
	$$
	and, hence,
	$$
	R \leq 2(1+\sqrt{\pi})e^{2K_0/K}\sqrt{\frac{N}{K_0+K}}.
	$$
	Notice that one has for $s\geq 0$,
	$$
	e^{2s} \geq \sqrt{1+s}.
	$$
Thus via $s:=K_0/K$, we obtain
	$$
	e^{2K_0/K} \sqrt{\frac{N}{K_0+K}} = \sqrt{\frac{N}{K}} \cdot e^{2K_0/K} \sqrt{\frac{1}{1+K_0/K}} \geq \sqrt{\frac{N}{K}}.
	$$
	Hence,
	$$
	r \leq \frac{ (\pi+\arctanh(\sqrt{1-e^{-2}}))}{1 - \sqrt{e^{-2} + \frac 1 4}} e^{2K_0/K} \sqrt{\frac{N}{K_0+K}}.
	$$
	
	We infer that
	$$
	\rho(x_0,V_0) \leq R_\rho+r+R \leq R_\rho + 18.2e^{2K_0/K} \sqrt{\frac{N}{K_0+K}}.
	$$
	This finishes the proof.
\end{proof}

Combining Theorem~\ref{thm:TubeNegativeCurvatureFiniteDimension} and Proposition~\ref{prop:DistanceAndDimensionBoundedDegree}, we obtain a distance bound for bounded vertex degree and infinite dimension, what proves part (iii) of Theorem \ref{thm:main}.

\begin{corollary}
	Let $G=(V,w,m)$ be a graph with finite maximal vertex degree $\Deg_{\max}$, and let $K,K_0>0$.
	Let $\emptyset \neq V_0\subset V$ and suppose that $G$ satisfies $$CD(K,\infty,x)\quad\forall \, x\in V \setminus V_0\quad\text{and}\quad CD(-K_0,\infty,x)\quad \forall \,x\in V_0.$$ Then, for all $x \in V$, one has
	\begin{align*}
	d(x,V_0) \leq 1 + 26 e^{4K_0/K} \frac{\Deg_{\max}}{\sqrt{KK_0}}.
	\end{align*}
	
\end{corollary}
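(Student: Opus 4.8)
The plan is to reduce the infinite-dimensional hypotheses to finite-dimensional ones and then feed everything into Theorem~\ref{thm:TubeNegativeCurvatureFiniteDimension}. The natural intrinsic metric to use is $\rho := d\sqrt{2/\Deg_{\max}}$ from Proposition~\ref{prop:DistanceAndDimensionBoundedDegree}, which has jump size $R_\rho = \sqrt{2/\Deg_{\max}}$ (adjacent vertices are at combinatorial distance $1$), and $G$ is automatically complete since it is locally finite with bounded degree (Proposition~\ref{prop:DistanceAndDimensionBoundedDegree} together with Remark~\ref{remark:intrinsic}(iii)); by the Remark following Theorem~\ref{thm:TubeNegativeCurvatureFiniteDimension} the non-degeneracy of $m$ may be dropped here.

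First I would fix a slack parameter $s\in(0,K)$, to be chosen later, and set $N:=2\Deg_{\max}/s$. The Cauchy--Schwarz step in the proof of Proposition~\ref{prop:DistanceAndDimensionBoundedDegree} is purely pointwise, so $CD(K,\infty,x)$ gives $CD(K-s,N,x)$ for every $x\in V\setminus V_0$, and the identical argument gives $CD(-(K_0+s),N,x)$ for every $x\in V_0$ from $CD(-K_0,\infty,x)$. Writing $\tilde K:=K-s>0$ and $\tilde K_0:=K_0+s>0$, the hypotheses of Theorem~\ref{thm:TubeNegativeCurvatureFiniteDimension} hold for this $N$ and this $\rho$, so for every $x_0\in V$
$$\rho(x_0,V_0)\ \le\ R_\rho + 18.2\,e^{2\tilde K_0/\tilde K}\sqrt{\frac{N}{\tilde K_0+\tilde K}}.$$
Dividing by $\sqrt{2/\Deg_{\max}}$ and using $\tilde K_0+\tilde K=K_0+K$ and $N=2\Deg_{\max}/s$ turns this into
$$d(x_0,V_0)\ \le\ 1 + 18.2\,\exp\!\Big(\tfrac{2(K_0+s)}{K-s}\Big)\,\frac{\Deg_{\max}}{\sqrt{s(K+K_0)}}.$$

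Now I would make the choice $s:=\dfrac{KK_0}{K+2K_0}\in(0,K)$, which is tailored so that $K_0+s=\frac{2K_0(K+K_0)}{K+2K_0}$ and $K-s=\frac{K(K+K_0)}{K+2K_0}$, whence $\dfrac{2(K_0+s)}{K-s}=\dfrac{4K_0}{K}$ exactly. For this $s$ one has $s(K+K_0)=\frac{KK_0(K+K_0)}{K+2K_0}$, and since $\frac{K+2K_0}{K+K_0}\le 2$ this gives $\frac{1}{\sqrt{s(K+K_0)}}\le \sqrt2/\sqrt{KK_0}$. Substituting,
$$d(x_0,V_0)\ \le\ 1 + 18.2\sqrt2\,e^{4K_0/K}\,\frac{\Deg_{\max}}{\sqrt{KK_0}}\ \le\ 1 + 26\,e^{4K_0/K}\,\frac{\Deg_{\max}}{\sqrt{KK_0}},$$
because $18.2\sqrt2<26$.

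The only genuinely delicate point is the choice of the slack $s$: a naive choice such as $s=K/2$ leaves a cross term in the exponent $2(K_0+s)/(K-s)$ and produces a spurious factor $e^{2}$, hence a much larger constant, so $s$ must be tuned to kill exactly that cross term while keeping $s(K+K_0)$ of order $KK_0$; the value $s=KK_0/(K+2K_0)$ does precisely this. Everything else is routine: verifying that the pointwise dimension reduction of Proposition~\ref{prop:DistanceAndDimensionBoundedDegree} applies verbatim on $V\setminus V_0$ and on $V_0$ (also with negative base curvature), and the elementary bound $\frac{K+2K_0}{K+K_0}\le 2$.
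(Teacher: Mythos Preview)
Your proof is correct and follows essentially the same approach as the paper: reduce to finite dimension via Proposition~\ref{prop:DistanceAndDimensionBoundedDegree}, apply Theorem~\ref{thm:TubeNegativeCurvatureFiniteDimension} with the intrinsic metric $\rho=d\sqrt{2/\Deg_{\max}}$, and optimize in the slack $s$. The only difference is the particular choice of $s$: the paper takes $s=\tfrac{KK_0}{2(K+K_0)}$ (making the square-root factor exact and bounding the exponent by $4K_0/K$), whereas you take $s=\tfrac{KK_0}{K+2K_0}$ (making the exponent exactly $4K_0/K$ and bounding the square-root factor); both yield the same final constant $18.2\sqrt{2}<26$.
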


\begin{proof}
	Proposition~\ref{prop:DistanceAndDimensionBoundedDegree} yields $CD(-\widetilde{K_0}, N)$ on $V_0$ and $CD(\widetilde{K}, N)$ on $V\setminus V_0$ with
	\begin{align*}
	\widetilde{K_0} &= K_0+s, \\
	\widetilde{K} &= K-s,\\
	N&= \frac{2\Deg_{\max}}s.
	\end{align*}
	Applying Theorem~\ref{thm:TubeNegativeCurvatureFiniteDimension} with
	$\rho:=d \sqrt{\frac {2} {\Deg_{\max}}}$ which is intrinsic due to Proposition~\ref{prop:DistanceAndDimensionBoundedDegree} yields	
	\begin{align*}
	\rho(x,V_0) \leq R_\rho + 18.2 e^{2(K_0+s)/(K-s)} \sqrt{\frac{2\Deg_{\max}}{(K_0+K)s}}
	\end{align*}
	with $R_\rho = \sqrt{\frac {2} {\Deg_{\max}}}.$
	
	By choosing $s:= \frac{K K_0}{2(K+K_0)}$, we see
	\begin{align*}
	\frac{K_0+s}{K-s} = \frac{2K_0(K+K_0) + KK_0}{2K(K+K_0)-KK_0} \leq \frac{2K_0(2K+K_0)}{K(2K+K_0)} = \frac{2K_0}{K}
	\end{align*}
	and
	\begin{align*}
	\sqrt{\frac{2\Deg_{\max}}{(K_0+K)s}} = 2\sqrt{\frac{\Deg_{\max}}{KK_0}}.
	\end{align*}
	Therefore,
	\begin{align*}
	\rho(x,V_0) \leq R_\rho + 36.4 e^{4K_0/K} \sqrt{\frac{\Deg_{\max}}{KK_0}}
	\end{align*}
	which implies
	\begin{align*}
	d(x,V_0) \leq \rho(x,V_0)\sqrt{\frac{\Deg_{\max}}{2}} \leq 1 + 18.2 \sqrt{2} e^{4K_0/K} \frac{\Deg_{\max}}{\sqrt{KK_0}}\le 1+26 e^{4K_0/K} \frac{\Deg_{\max}}{\sqrt{KK_0}}.
	\end{align*}
	This finishes the proof.
\end{proof}

\section*{Acknowledgements}
C.~R. is grateful for the hospitality of Durham University, where parts of this work have been carried out during his visits. Moreover, the research was supported by the EPRSC Grant EP/K016687/1 \grqq{}Topology, Geometry and Laplacians of Simplicial Complexes\grqq{}. F.~M. wants to thank the German Research Foundation (DFG) and the German National Merit Foundation for financial support, and the Harvard University Center of Mathematical Sciences and Applications for their hospitality.

%
%

 \newcommand{\etalchar}[1]{$^{#1}$}

\noindent
Shiping Liu, \\
School of Mathematical Sciences, University of Science and Technology of China, Hefei 230026, Anhui Province, China\\
\texttt{spliu@ustc.edu.cn}\\
\\
Florentin M\"unch,\\
Max-Planck-Institut f\"ur Mathematik in den Naturwissenschaften, Leipzig, Inselstra{\ss}e 22, 04103 Leipzig, Germany\\
\texttt{Florentin.Muench@mis.mpg.de}\\
\\
Norbert Peyerimhoff, \\
Department of Mathematical Sciences, Durham University, Durham DH1, 3LE, United Kingdom\\
\texttt{norbert.peyerimhoff@durham.ac.uk}\\
\\
Christian Rose,\\
Max-Planck-Institut f\"ur Mathematik in den Naturwissenschaften, Leipzig, Inselstra{\ss}e 22, 04103 Leipzig, Germany\\
\texttt{Christian.Rose@mis.mpg.de}

\end{document}